\theoremstyle{plain}
\newtheorem{theorem}{Theorem}[section]
\newtheorem{lemma}[theorem]{Lemma}
\newtheorem{proposition}[theorem]{Proposition}
\newtheorem{corollary}[theorem]{Corollary}
\theoremstyle{definition}
\newtheorem{definition}[theorem]{Definition}
\theoremstyle{definition}
\newtheorem{example}[theorem]{Example}
\newtheorem{remark}[theorem]{Remark}
\newcommand{\mov}{\textnormal{Mov}}
\newcommand{\darts}{\Omega}
\newcommand{\opa}{\diamond} 
\newcommand{\opb}{\otimes} 
\newcommand{\opbvar}{\times}
\newcommand{\autotopismgroup}[1]{\textnormal{Aut}(#1)}
\newcommand{\automorphismgroup}[1]{\textnormal{Aut}(#1)}
\title{Transitive latin bitrades}
\author{Carlo
H\"{a}m\"{a}l\"{a}inen\footnote{\texttt{carlo.hamalainen@gmail.com}} \\
Department of Mathematics \\
Charles University \\
Sokolovsk\'a 83, 186 75 Praha 8 \\
Czech Republic \\ 
~\\
Nicholas J. Cavenagh\\
School of Mathematical Sciences\\
Monash University\\
Vic 3800 Australia }
\begin{document}

\maketitle

\begin{abstract} 
In this note we give two results. First, if a latin 
bitrade $(T^{\opa},\,
T^{\opb})$ is primary, thin, separated, and 
$\autotopismgroup{T^{\opa}}$ acts regularly on $T^{\opa}$,
then
$(T^{\opa},\, T^{\opb})$ may be derived from a group-based construction. Second,
if a latin bitrade $(T^{\opa},\, T^{\opb})$ has genus~$0$ then 
the disjoint mate $T^{\opb}$ is unique
and the autotopism group of $T^{\opa}$ is equal to the autotopism group of
$T^{\opb}$.
\end{abstract}

\section{Introduction}

Here we collect some definitions and known results about latin bitrades.
We refer the reader to \cite{groupsarxiv,hamalainen2007,3hom} for further
background material (especially for group-based bitrades constructing a
hypermap from a bitrade.  

\begin{definition}\label{defnLatinSquare}
Let $A_1$, $A_2$, and $A_3$ be finite sets of size $n > 0$.
A {\em latin square\/} 
$L = L^{\opa}$ of {\em order $n$\/} is an $n \times n$ array with
rows indexed by $A_1$, columns indexed by $A_2$,
and entries from $A_3$.
Further, each $e \in A_3$ appears exactly once in each row
and exactly once in
each column.
A {\em partial latin square\/} 
of order $n$
is an $n \times n$ array where each $e \in A_3$ occurs at most once
in each row and at most once in each column. 
\end{definition}
We may view $L^{\opa}$ as a set and write $(x,\, y,\, z) \in L^{\opa}$
if and only if symbol $z$ appears in the cell at row $x$, column $y$.
As a binary operation we write
$x \opa y = z$ if and only if $(x,\, y,\, z) \in L^{\opa}$.

\begin{definition}
Let $A_1$, $A_2$, and $A_3$ be the row, column, and symbol labels,
respectively.  Let $T^{\opa}$, $T^{\opb}\subset A_1\times A_2\times A_3$ be two
partial latin squares. 
Then $(T^{\opa},\, T^{\opb})$ is called a {\em latin bitrade}
if the following  three
conditions are all satisfied:
\begin{itemize}
\item[(R1)] $T^{\opa} \cap T^{\opb} = \emptyset$.

\item[(R2)] For all $(a_1, a_2, a_3) \in T^{\opa}$ and all $r,s \in \{1, 2, 3\}$,
$r \neq s$, there exists a unique $(b_1, b_2, b_3) \in T^{\opb}$
such that $a_r=b_r$ and $a_s=b_s$.

\item[(R3)] For all $(a_1, a_2, a_3) \in T^{\opb}$ and all $r,s \in \{1, 2, 3\}$,
$r \neq s$, there exists a unique $(b_1, b_2, b_3) \in T^{\opa}$
such that $a_r=b_r$ and $a_s=b_s$.

\end{itemize}
\end{definition}

Two (partial) latin squares $P^{\opa}$ and $Q^{\opa}$ are {\em isotopic}
if there is some permutation 
$(\alpha,\beta,\gamma) \in S_n \times S_n \times S_n$ such that
\[
Q^{\opa} = \{ (i \alpha, j \beta, k \gamma) \mid (i, j, k) \in P^{\opa}
\}.
\]
We say that $(T^{\opa},\, T^{\opb})$ is isotopic to $(U^{\opa},\,
U^{\opb})$ if and only if $T^{\opa}$ is isotopic to $U^{\opa}$ and
$T^{\opb}$ is isotopic to $U^{\opb}$.

Trades have been studied for other types of combinatorial structures
(see~\cite{MR2041871} for a survey about trades in combinatorial designs
and~\cite{cavenaghSurvey} for a recent survey on latin trades). This
paper only refers to trades of latin squares so we abbreviate
latin bitrade to just {\em bitrade}. If $(T^{\opa},\, T^{\opb})$ is a
bitrade then we refer to $T^{\opa}$ as the {\em trade} and 
$T^{\opb}$ as the {\em disjoint mate}.

\begin{definition}\label{defya}
Define the map $\beta_r : T^{\opb} \rightarrow
T^{\opa}$ where $(a_1,a_2,a_3)\beta_r = (b_1,b_2,b_3)$ implies that
$a_r \neq b_r$
and $a_i = b_i$ for $i \neq r$.
(Note that by conditions \textnormal{ (R2)} and \textnormal{ (R3)} the map $\beta_r$ and its
inverse are well defined.) 
In particular, let $\tau_1,\tau_2,\tau_3: T^{\opa} \rightarrow T^{\opa}$, where
$\tau_1=\beta_2^{-1}\beta_3$,
$\tau_2=\beta_3^{-1}\beta_1$ and
$\tau_3=\beta_1^{-1}\beta_2$.
For each $i\in \{1,2,3\}$, let ${\mathcal A}_i$ be the set of cycles in $\tau_i$. 
\end{definition}
A latin bitrade is {\em separated} if
$\left| \mathcal{A}_i \right| = 1$ for $i = 1$, $2$, $3$.
In other words, each row, column, and symbol corresponds to a single
cycle of $\tau_1$, $\tau_2$, and $\tau_3$ respectively.

\begin{lemma}
\label{perm}
The permutations $\tau_1$, $\tau_2$ and $\tau_3$ satisfy the following properties:
\begin{itemize}
\item[\textnormal{(Q1)}] If $\rho \in {\mathcal A}_r$, $\mu \in {\mathcal A}_s$, $1 \leq r < s \leq 3$,
then $\left| \mov(\rho) \cap \mov(\mu) \right| \leq 1$.

\item[\textnormal{(Q2)}] For each $i \in \{1, 2, 3\}$, $\tau_i$ has no fixed points.
\item[\textnormal{(Q3)}] $\tau_1\tau_2\tau_3=1$.
\end{itemize}
\end{lemma}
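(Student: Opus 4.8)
The plan is to exploit the fact that each map $\beta_r$ alters only the $r$-th coordinate, so that each composite $\tau_i$ fixes one coordinate and changes another in a controlled way. First I would record the precise behaviour of the $\beta_r$: for $x=(x_1,x_2,x_3)\in T^{\opa}$ the element $x\beta_r^{-1}\in T^{\opb}$ agrees with $x$ in every coordinate except the $r$-th, where it must differ by (R1); dually $\beta_r\colon T^{\opb}\to T^{\opa}$ changes only the $r$-th coordinate. Reading the products under the right-action convention of Definition~\ref{defya} (the left-hand factor is applied first, as forced by the domains), $\tau_1=\beta_2^{-1}\beta_3$ first changes coordinate $2$ via $\beta_2^{-1}$ and then coordinate $3$ via $\beta_3$, so $\tau_1$ preserves coordinate $1$; symmetrically $\tau_2$ preserves coordinate $2$ and $\tau_3$ preserves coordinate $3$.

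For (Q2) I note that in computing $x\tau_1$ the first factor $\beta_2^{-1}$ strictly changes the second coordinate, while the second factor $\beta_3$ touches only the third coordinate and so cannot restore it; hence the second coordinate of $x\tau_1$ differs from $x_2$ and $x\tau_1\neq x$. Applying this cyclically shows $\tau_2$ and $\tau_3$ are likewise fixed-point-free. For (Q3) I would simply telescope: writing $\tau_1\tau_2\tau_3=\beta_2^{-1}\beta_3\,\beta_3^{-1}\beta_1\,\beta_1^{-1}\beta_2$ and cancelling the adjacent inverse pairs $\beta_3\beta_3^{-1}$ and $\beta_1\beta_1^{-1}$ leaves $\beta_2^{-1}\beta_2=1$.

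The substance is (Q1), and the key step is again the coordinate-fixing observation. Since $\tau_r$ preserves coordinate $r$, all elements moved by a single cycle $\rho\in\mathcal{A}_r$ share a common value in coordinate $r$; likewise all elements of $\mov(\mu)$ for $\mu\in\mathcal{A}_s$ share a common value in coordinate $s$. If $x,x'\in\mov(\rho)\cap\mov(\mu)$ with $r\neq s$, then $x$ and $x'$ agree in the two distinct coordinates $r$ and $s$. I would then invoke the partial latin square property of $T^{\opa}$: two triples agreeing in two of the three coordinates must agree in the third as well (same row and column force the same symbol, same row and symbol force the same column, same column and symbol force the same row). Hence $x=x'$, giving $\left|\mov(\rho)\cap\mov(\mu)\right|\leq 1$.

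The only care needed throughout is to keep the composition direction consistent with the right-action convention and to verify the ``two coordinates determine the third'' property in each of the three coordinate pairings. I do not expect a genuine obstacle here: once the coordinate-tracking of the first paragraph is in place, (Q2) and (Q3) are immediate and (Q1) reduces to the defining property of a partial latin square.
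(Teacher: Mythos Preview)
Your argument is correct in all three parts. The paper itself does not supply a proof of this lemma: it is stated in the introductory section among ``definitions and known results'' and referred to the background references. Your approach is the standard one --- tracking which coordinate each $\beta_r^{\pm1}$ alters, then telescoping for (Q3), using the strict coordinate change for (Q2), and for (Q1) observing that a cycle of $\tau_r$ lies in a single coordinate-$r$ class so that the partial-latin-square property (any two coordinates determine the third) forces $|\mov(\rho)\cap\mov(\mu)|\le 1$. There is nothing to compare against; your write-up would serve perfectly well as the omitted proof.
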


\begin{definition}
A latin bitrade $(T^{\opa},\, T^{\opb})$ is said to be {\em primary}  
if whenever $(U^{\opa},\, U^{\opb})$ is a latin bitrade such that
$U^{\opa}\subseteq
T^{\opa}$ 
and
$U^{\opb}\subseteq   
T^{\opb}$, then 
$(T^{\opa},\, T^{\opb})=  
(U^{\opa},\, U^{\opb})$.  
\end{definition}  

\begin{definition}\label{defn:thin}
A latin bitrade $(T^{\circ}, T^{\star})$ is said to be {\em thin}  
if whenever $i\circ j = i'\circ j'$ (for $i\neq i'$, $j\neq j'$), then
$i\star j'$ is either undefined, or 
$i\star j'= i\circ j$.
\end{definition}

So if $(T^{\opa},\, T^{\opb})$ is thin and
$(T^{\opa},\, T^{\opbvar})$ is another bitrade, then
$T^{\opb} = T^{\opbvar}$.

Let $G = \langle \tau_1, \tau_2, \tau_3 \rangle$. Bitrades for which
$\left| G \right| = \left| T^{\opa} \right|$ have been studied in
\cite{groupsarxiv}. Informally, three group elements 
$a$, $b$, $c \in G$ may be identified as $\tau_1$, $\tau_2$, and
$\tau_3$. These elements act on $G$ by multiplication on the right.
Using just $\tau_1$, $\tau_2$, and $\tau_3$ we may recover a bitrade
$(U^{\opa},\, U^{\opb})$ which is isotopic to the original bitrade
$(T^{\opa},\, T^{\opb})$.

\begin{definition}\label{tdef}
Let $G$ be a finite group. 
Let $a$, 
$b$, $c$ be non-identity elements of $G$ and let
$A=\langle a \rangle$,
$B=\langle b \rangle$ and
$C=\langle c \rangle$. Define three conditions:
\begin{itemize}
\item[\textnormal{ (G1)}] $abc=1$;

\item[\textnormal{ (G2)}] $|A\cap B| = |A \cap C| = |B\cap C|=1$;

\item[\textnormal{ (G3)}] $\langle a,b,c \rangle = G$.

\end{itemize}
If \textnormal{(G1)}
and \textnormal{(G2)} are satisfied then define two 
(partial) arrays $T^{\opa}$
and $T^{\opb}$ by
\[
T^{\circ} = \{ (gA,gB,gC)\mid g\in G\}, \quad
T^{\star} = \{ (gA,gB,ga^{-1}C)\mid g\in G\}. 
\]
\end{definition}

\begin{theorem}[Dr\'apal \cite{Dr9}]\label{thmGroupBitrade}
The pair of (partial) arrays
$(T^{\circ},\, T^{\star})$ given in Definition~\ref{tdef}
form a latin bitrade with size $|G|$,
$|G:A|$ rows (each with $|A|$ entries), 
$|G:B|$ columns (each with $|B|$ entries)  and
$|G:C|$ entries (each occurring $|C|$ times).
If \textnormal{(G3)} is also satisfied then the
bitrade is primary.
\end{theorem}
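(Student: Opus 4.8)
The plan is to verify the three defining conditions (R1), (R2), (R3) of a latin bitrade for the arrays $T^{\circ}$ and $T^{\star}$, then count sizes and row/column/symbol multiplicities, and finally show that condition (G3) forces the bitrade to be primary.

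Let me unpack what needs checking.

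The arrays are:
- $T^{\circ} = \{(gA, gB, gC) : g \in G\}$
- $T^{\star} = \{(gA, gB, ga^{-1}C) : g \in G\}$

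where $A = \langle a\rangle$, $B = \langle b\rangle$, $C = \langle c\rangle$, and $abc = 1$ (G1), and the three cyclic subgroups pairwise intersect trivially (G2).

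**First task: these are partial latin squares.** A triple $(gA, gB, gC)$ means "in row $gA$ (a left coset of $A$), column $gB$, the symbol is $gC$." For this to be a partial latin square I need: no two triples share the same (row, column) pair but differ in symbol, and no two share (row, symbol) differing in column, etc. Actually more precisely, for a partial latin square: each symbol appears at most once per row and per column.

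Let me think about when two elements $g, h$ give the same row and column: $gA = hA$ and $gB = hB$. Then $h^{-1}g \in A$ and $h^{-1}g \in B$, so $h^{-1}g \in A \cap B = \{1\}$ by (G2), giving $g = h$. So the map $g \mapsto (gA, gB)$ is injective — each (row, column) cell of $T^\circ$ is occupied by at most one triple. Similarly for symbol constraints. This uses (G2) crucially.

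**Second: (R1), disjointness.** Suppose $(gA, gB, gC) \in T^\circ$ equals $(hA, hB, ha^{-1}C) \in T^\star$. Then $gA = hA$, $gB = hB$ give $g = h$ as above, and then $gC = ga^{-1}C$ forces $a \in C$ (since $a^{-1} \in C$), hence $\langle a\rangle \cap \langle c \rangle \neq \{1\}$ unless $a = 1$ — but $a \neq 1$, contradicting (G2). So $T^\circ \cap T^\star = \emptyset$.

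**Third and hardest: (R2) and (R3).** These are the bijection conditions. Take $(a_1, a_2, a_3) \in T^\circ$, say $(gA, gB, gC)$. Fix two coordinates, say the row and column $(gA, gB)$; I must find a unique triple in $T^\star$ with the same row and column. A $T^\star$-triple is $(hA, hB, ha^{-1}C)$; matching rows and columns gives $h = g$, and the symbol is $ga^{-1}C$, which differs from $gC$ (by the (R1) argument). So existence and uniqueness are immediate for the (row, column) pair. The genuinely delicate cases are fixing (row, symbol) or (column, symbol), because there I must solve coset equations like $gA = hA$ and $gC = ha^{-1}C$ simultaneously for $h$, and show the solution exists and is unique. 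This is where (G1) $abc = 1$ enters: it relates the three cosets so that fixing two coordinates in one array pins down the matching element in the other. **I expect this to be the main obstacle** — carefully converting each "fix coordinates $r, s$" requirement into a coset equation and using (G1), (G2) to extract a unique $h$.

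**Fourth: the size and multiplicity count.** The size is $|T^\circ| = |G|/|{\ker}|$ where the kernel of $g \mapsto (gA, gB, gC)$ is $A \cap B \cap C = \{1\}$, so $|T^\circ| = |G|$. The number of distinct rows is the number of distinct cosets $gA$, namely $|G:A|$, and each row $gA$ contains the entries indexed by the $|A|$ elements of that coset — giving $|A|$ entries per row. Symmetric counts give $|G:B|$ columns with $|B|$ entries each and $|G:C|$ symbols each occurring $|C|$ times.

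**Fifth: primality under (G3).** Suppose $(U^\circ, U^\star)$ is a sub-bitrade with $U^\circ \subseteq T^\circ$, $U^\star \subseteq T^\star$. I would argue that the set of group elements $g$ appearing in $U^\circ$ is closed under the action generating the bitrade structure — specifically, the permutations $\tau_1, \tau_2, \tau_3$ correspond to right multiplication by $a, b, c$ on $G$. A nonempty sub-bitrade's index set must be invariant under these multiplications (this is essentially what (R2)/(R3) enforce on the sub-bitrade), hence invariant under the subgroup $\langle a, b, c\rangle$. By (G3) this subgroup is all of $G$, so the orbit is all of $G$, forcing $U^\circ = T^\circ$ and thus $(U^\circ, U^\star) = (T^\circ, T^\star)$. **This primality step, while conceptually clean, hides the claim that the $\tau_i$ act as right multiplication** — that identification (foreshadowed in the text's discussion of $G = \langle\tau_1,\tau_2,\tau_3\rangle$) is the piece I would need to establish or cite carefully, and it is the second place where real work is required.

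In summary, the routine part is (R1) and the (row, column) direction of (R2)/(R3), which follow directly from (G2); the substantive work is the other two directions of (R2)/(R3) using (G1), and the transitivity/primality argument using (G3).
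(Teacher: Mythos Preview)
The paper does not prove this theorem; it is quoted with attribution to Dr\'apal and no argument is given, so there is nothing in the paper to compare your proposal against.

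That said, your plan is the standard one and is correct. The step you flag as the main obstacle---the (row,\,symbol) and (column,\,symbol) cases of (R2) and (R3)---goes through cleanly once written out. For example, matching row and symbol between $(gA,gB,gC)\in T^{\circ}$ and $(hA,hB,ha^{-1}C)\in T^{\star}$ forces $h\in gA$ and $g^{-1}ha^{-1}\in C$; writing $h=ga^{i}$ gives $a^{i-1}\in A\cap C=\{1\}$, hence $h=ga$ is the unique solution. The (column,\,symbol) case is handled the same way using $b^{-1}a^{-1}=c$ from (G1), yielding $h=gb^{-1}$. The (R3) cases are symmetric.

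Your primality sketch is also sound. Under the identification $g\leftrightarrow (gA,gB,gC)$ of $G$ with $T^{\circ}$, the permutations $\tau_{1},\tau_{2},\tau_{3}$ of Definition~\ref{defya} are precisely right multiplication by $a,b,c$ (a short computation of the same kind as above). A nonempty sub-bitrade therefore corresponds to a nonempty subset of $G$ closed under right multiplication by each of $a,b,c$, hence under $\langle a,b,c\rangle$; by (G3) this is all of $G$, so the sub-bitrade equals $(T^{\circ},T^{\star})$.
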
 

A group $G$ acting on a (finite) set $X$ is {\em regular} if for any
$x$, $y \in X$ there is a unique $g \in G$ such that
$xg = y$. A corollary is that $G$ acts transitively on $X$.

Euler's {\em genus formula} is:
\begin{equation}\label{eqnEuler}
2-2g = v - e + f.
\end{equation}
If the bitrade $(T^{\opa},\, T^{\opb})$ has $\tau_i$ representation
$[\tau_1,\, \tau_2,\, \tau_3]$ then the genus formula for the hypermap
embedding becomes
\[
2 - 2g = z(\tau_1) + z(\tau_2) + z(\tau_3) - \left| T^{\opa} \right|
\]
where $z(\tau_i)$ is the number of cycles in the permutation $\tau_i$.

Notation and definitions for autotopisms and automorphisms are as
follows:

\begin{itemize}

\item Autotopism group of a partial latin square of order $n$:
\[
\autotopismgroup{T^{\opa}} = \langle g \in S_n \times S_n \times S_n
\mid T^{\opa} g = T^{\opa} \rangle.
\]

\item Autotopism group of a bitrade of order $n$:
\[
\autotopismgroup{T^{\opa}, T^{\opb}} = \langle g \in S_n \times S_n \times S_n
\mid T^{\opa} g = T^{\opa} \textnormal{ and } T^{\opb} g = T^{\opb} \rangle.
\]
Equivalently, $\autotopismgroup{T^{\opa}, T^{\opb}} =
\autotopismgroup{T^{\opa}} \cap \autotopismgroup{T^{\opb}}$.

\item
A latin bitrade is transitive if
\begin{enumerate}
\item for each pair $(i,j,k),\, (i',\, j',\, k') \in T^{\opa}$, there is
an autotopism $(\alpha_1,\, \alpha_2,\, \alpha_3) \in 
\autotopismgroup{T^{\opa},\, T^{\opb}}$ such that 
$(i\alpha_1,\, j\alpha_2,\, k\alpha_3)=(i',\, j',\, k')$; and

\item for each pair $(i,j,k),\, (i',\, j',\, k') \in T^{\opb}$, there is
an autotopism $(\alpha_1,\, \alpha_2,\, \alpha_3) \in 
\autotopismgroup{T^{\opa},\, T^{\opb}}$ such that 
$(i\alpha_1,\, j\alpha_2,\, k\alpha_3)=(i',\, j',\, k')$.
\end{enumerate}

\item Automorphism group of a $\tau_i$ representation:
\[
\automorphismgroup{[\tau_i]} =
\automorphismgroup{[\tau_1,\, \tau_2,\, \tau_3]}
= \langle g \in S_m \mid  g \tau_i = \tau_i g \textnormal{ for $i=1$,
$2$, $3$} \rangle
\]
where $m = \left| T^{\opa} \right|$.  
So
$\automorphismgroup{[\tau_1,\, \tau_2,\, \tau_3]}$ is 
$C_{S_m}(G)$, the centraliser of $G$ in $S_m$.

\end{itemize}

\section{Transitive bitrades}

The following theorem provides necessary conditions for a bitrade to be
derived from a group:

\begin{theorem}\label{theoremSharplyTransitiveTrade}
Let $(T^{\opa},\, T^{\opb})$ be a 
primary, thin, separated bitrade such
that $\autotopismgroup{T^{\opa}}$ is regular.
Then $(T^{\opa},\, T^{\opb})$ is isotopic to the trade constructed 
as in Definition~\ref{tdef} using the group $\autotopismgroup{T^{\opa}}$.
\end{theorem}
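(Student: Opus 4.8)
The plan is to exploit the regular action to identify $T^{\opa}$ with the abstract group $G := \autotopismgroup{T^{\opa}}$, to recognise $\tau_1,\tau_2,\tau_3$ as translations of $G$, and then to read off the three elements $a,b,c$ realising the construction of Definition~\ref{tdef}. First I would remove the distinction between $\autotopismgroup{T^{\opa}}$ and $\autotopismgroup{T^{\opa},\,T^{\opb}}$: since the bitrade is thin, the remark following Definition~\ref{defn:thin} shows that the disjoint mate is unique, so any autotopism of $T^{\opa}$ carries $T^{\opb}$ to some disjoint mate of $T^{\opa}$, which must be $T^{\opb}$ itself. Hence $\autotopismgroup{T^{\opa}}=\autotopismgroup{T^{\opa},\,T^{\opb}}=:G$, and $G$ acts regularly on the cells of $T^{\opa}$. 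Fixing a base cell $x_0$, the map $g\mapsto x_0 g$ is a bijection $G\to T^{\opa}$, so I identify $T^{\opa}$ with $G$ and the $G$-action with one regular representation.

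Next I would show that each $\tau_i$ is a translation. A direct check using (R2) and (R3) does this: if $g=(\alpha_1,\alpha_2,\alpha_3)\in G$ and $y\in T^{\opb}$ with $y\beta_r=x$, then $xg$ and $yg$ agree in every coordinate except $r$ and differ in coordinate $r$ (each $\alpha_r$ being a permutation), so by uniqueness $xg=(yg)\beta_r$; thus $g$ commutes with every $\beta_r$ and hence with each $\tau_i=\beta_s^{-1}\beta_t$. Therefore $\langle\tau_1,\tau_2,\tau_3\rangle\subseteq C_{S_m}(G)$, where $m=|T^{\opa}|$. Because $G$ is regular, its centraliser in $S_m$ is the opposite regular representation, a regular group of order $|G|$ isomorphic to $G$; consequently each $\tau_i$ is a translation by a unique group element, and I set $a,b,c\in G$ to be the elements with $\tau_1,\tau_2,\tau_3$ equal to the corresponding translations, and write $A=\langle a\rangle$, $B=\langle b\rangle$, $C=\langle c\rangle$.

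Then I would verify the conditions of Definition~\ref{tdef} and assemble the isotopy. Condition (Q3) gives $\tau_1\tau_2\tau_3=1$, which becomes $abc=1$, namely (G1) (after fixing the side on which the translations act); (Q2) forces each $\tau_i\neq 1$, so $a,b,c$ are non-identity. For (G2), the orbit of $\langle\tau_i\rangle$ through any cell is a coset of the corresponding cyclic subgroup, and by separatedness these orbits are \emph{precisely} the rows, columns and symbols. Condition (Q1) then says an $A$-orbit and a $B$-orbit meet in at most one cell; taking the orbits through the base cell yields $|A\cap B|\leq 1$, whence $|A\cap B|=1$, and similarly $|A\cap C|=|B\cap C|=1$, giving (G2). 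The assignment sending $g$ to the triple of its row, column and symbol is then a bijection realising $T^{\opa}$ as the trade of Definition~\ref{tdef} (one passes between left and right cosets by the relabelling $g\mapsto g^{-1}$, which preserves all three incidence relations), and primarity forces $\langle a,b,c\rangle=G$, so (G3) holds as well. Finally, since $T^{\opa}$ is thin its disjoint mate is unique, so this isotopy must carry $T^{\opb}$ onto the mate $T^{\star}$ produced by the construction, completing the proof.

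The main obstacle I anticipate is the bookkeeping in the step identifying the $\tau_i$ with translations: one must pin down which regular representation $G$ realises, on which side the centralising translations act, and hence get the orientation of $abc=1$ and the left-versus-right cosets correct, so that the resulting array is exactly the one in Definition~\ref{tdef} rather than its mirror image. The separated hypothesis is precisely what makes the orbit-to-line correspondence exact, thinness is what pins down the mate, and the translation of (Q1) into (G2) is conceptually the heart of the argument.
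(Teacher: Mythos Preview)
Your argument is correct and reaches the same conclusion, but the route is genuinely different from the paper's.

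The paper works from the autotopism side towards the $\tau_i$: it fixes a cell $(i,j,k)$, picks out the three specific autotopisms $\nu_1,\nu_2,\nu_3\in\autotopismgroup{T^{\opa}}$ sending that cell to its neighbours in the bitrade structure, and then verifies directly that $[\nu_1,\nu_2,\nu_3]$ satisfies (Q1)--(Q3). This makes $[\nu_1,\nu_2,\nu_3]$ the $\tau$-representation of \emph{some} bitrade $(T^{\opa},T^{\opbvar})$, and thinness is invoked only at this point to force $T^{\opbvar}=T^{\opb}$ and hence $[\nu_i]=[\tau_i]$. The isotopy to the coset array is then written down explicitly.

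You run the argument in the opposite direction: you start with the given $\tau_i$, use thinness at the outset to show that $\autotopismgroup{T^{\opa}}$ already preserves $T^{\opb}$ (so it commutes with each $\beta_r$ and hence with each $\tau_i$), and then invoke the centraliser-of-a-regular-action fact (the paper's Lemma~\ref{lemmaLeftRightAction}) to recognise each $\tau_i$ as a translation by some $a,b,c$. Conditions (Q1)--(Q3) then translate directly into (G1)--(G2), with separatedness guaranteeing that the $\langle\tau_i\rangle$-orbits are exactly the lines, and primarity giving (G3).

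What each buys: the paper's approach is self-contained and never needs the centraliser lemma, at the cost of re-deriving (Q1)--(Q3) for the $\nu_i$ from scratch. Your approach is shorter and more structural, since (Q1)--(Q3) are already known for the $\tau_i$ and you simply transport them to the group via the centraliser identification; it also makes the role of each hypothesis (thin $\Rightarrow$ $G$ preserves $T^{\opb}$; separated $\Rightarrow$ orbits are lines; primary $\Rightarrow$ (G3)) more transparent. Your caveat about the left/right bookkeeping is apt, and is the only place real care is needed.
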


\begin{proof}
Let $(T^{\opa},\, T^{\opb})$ be a nontrivial bitrade with the specified
properties.  
Fix an entry $(i,j,k) \in T^{\opa}$. 
Since $(T^{\opa},\, T^{\opb})$ is 
a bitrade,
there exists a $k'\neq k$ such that 
$(i,j',k')$ and $(i',j',k')$ are entries in 
$T^{\opa}$, for some $i'\neq i$ and $j'\neq j$.
The situation is as follows:
\[
\begin{array}{c|ccc}
\opa/\opb & j    & ~ & j' \\
\hline  i & k & ~ & k'/k \\
        ~ & ~    & ~ & ~  \\
       i' & ~   & ~ & k
\end{array}
\]
The bitrade is thin so the value $k'$ is the only possible choice for
$(i,j',k') \in T^{\opb}$.
The group $\autotopismgroup{T^{\opa}}$ is regular so
we may write $\nu_1$, $\nu_2$, $\nu_3 \in \autotopismgroup{T^{\opa}}$ for the 
unique autotopisms such that:
\begin{align*}
(i,j,k)   \nu_1 &= (i,j',k') \\
(i,j',k') \nu_2 &= (i',j',k') \\
(i,j',k') \nu_3 &= (i,j,k).
\end{align*}
By definition, $(i,j,k) \nu_1 \nu_2 \nu_3 = (i,j,k)$.
Since
$\autotopismgroup{T^{\opa}}$ is regular, it follows that $\nu_1 \nu_2
\nu_3=1$, so 
(Q3) is satisfied. Since the bitrade is nontrivial,
$\nu_i \neq 1$ for $i=1$, $2$, $3$.
By regularity each $\nu_i$ has no
fixed point, so (Q2) is satisfied.

Let $\rho$ be a cycle of $\nu_1$, and $\mu$ a cycle of $\nu_2$.
If $\left| \mov(\rho) \cap \mov(\mu) \right| > 1$ then
there exists an $(r,c,e) \in T^{\opa}$ such that
$(r,c,e) \nu_1^m = (r,c,e) \nu_2^n$
where 
$o(\nu_1) \nmid m$
and
$o(\nu_2) \nmid n$.
Then, $(r,c,e) a^m b^{-n} = (r,c,e)$ 
so by regularity, $a^m b^{-n} = 1$.
Since $\nu_1$ fixes row $i$ and $\nu_2$ fixes column $j$, it follows that
$a^m$ fixes row $i$ and column $j$. So $o(a) \mid m$, a contradiction.
By similar reasoning on $\nu_3$ we see that (Q1) is satisfied.

With (Q1), (Q2), and (Q3) it follows that
$[\nu_1,\, \nu_2,\, \nu_3]$ is a representation of
a bitrade $(T^{\opa},\, T^{\opbvar})$. Since $T^{\opa}$ is thin, it
follows that $(T^{\opa},\, T^{\opbvar}) = (T^{\opa},\, T^{\opb})$.
So $[\nu_1,\, \nu_2,\, \nu_3] = [\tau_1,\, \tau_2,\, \tau_3]$,
where $\tau_i$ is the representation of 
$(T^{\opa},\, T^{\opb})$.

Now let $G = \langle \nu_1, \nu_2, \nu_3 \rangle$ and define
three cyclic subgroups
$A = \langle \nu_1 \rangle$,
$B = \langle \nu_2 \rangle$, and
$C = \langle \nu_3 \rangle$. Create the group-based bitrade
$(U^{\opa},\, U^{\opb})$. Construct a bijection
$\theta \colon T^{\opa} \rightarrow U^{\opa}$.
\[
(i \alpha, j \beta, k \gamma) \mapsto (gA, gB, gC)
\textnormal{ where } (\alpha,\beta,\gamma)=g \in
\autotopismgroup{T^{\opa}}.
\]
Suppose that $i \alpha = i \alpha'$
where $(\alpha,\beta,\gamma) = g$ and $(\alpha',\beta',\gamma') = g'$.
Then $g^{-1} g'$ is an autotopism that fixes row $i$. In particular,
$(i,j,k) g^{-1} g' = (i,j',k')$
for some $j$, $j'$, $k$, $k'$. However,
$(i,j,k) \nu_1^m = (i,j',k')$ for some $m$.
By regularity it follows that
$g^{-1} g' = \nu_1^m \in A$, so
$gA = g'A$. By similar reasoning on columns and symbols
we see that $\theta$ is an isotopism. This establishes the theorem.  
\end{proof}

\section{Some examples}

Theorem~\ref{theoremSharplyTransitiveTrade} does not characterise all
group-based bitrades. For example the following group-based bitrade is
not thin.
\begin{align*}
&\begin{array}{c||ccc}
\opa & 1 & 2 & 3 \\
\hline \hline 1 & 1 & 2 & 3 \\
2 & 2 & 3 & 1 \\
3 & 3 & 1 & 2
\end{array}
\qquad
\begin{array}{c||ccc}
\opb & 1 & 2 & 3 \\
\hline \hline 1 & 1 & 2 & 3 \\
2 & 2 & 3 & 1 \\
3 & 3 & 1 & 2
\end{array} 
\qquad
\begin{array}{c||ccc}
\opa & ~ & ~ & ~ \\
\hline \hline ~ & 1 & 2 & 3 \\
~ & 4 & 5 & 6 \\
~ & 7 & 8 & 9
\end{array} \\
\tau_1 &= (1,3,2)(4,6,5)(7,9,8) \\
\tau_2 &= (1,4,7)(2,5,8)(3,6,9) \\
G &= \langle \tau_1, \tau_2 \rangle \\
\left| G \right| &= 9 = \left| X \right| \\
A &= C_{S_9}(G) = G
\end{align*}

\begin{example}
Let $A_1=\{a,b,c\}$, $A_2=\{d,e,f\}$ and $A_3=\{g,h,i,j\}$. 
Then $(T^{\opa},T^{\opb})$ is a {\em latin bitrade}, where
$T^{\opa},T^{\opb}\subset A_1\times A_2\times A_3$ are shown below:
\[
    T^{\opa} = \begin{array}{c|ccc}
    \opa & d & e & f \\
    \hline a & g & h & i \\
    b & h & i & j \\
    c & j & - & g \\
    \end{array}
    \qquad
    T^{\opb} = \begin{array}{c|ccc}
    \opb & d & e & f \\
    \hline a & h & i & g \\
    b & j & h & i \\
    c & g & - & j \\
    \end{array}
\]
We may also write:
\begin{eqnarray*}
T^{\opa} & = & \{(a,d,g),(a,e,h),(a,f,i),(b,d,h),(b,e,i),(b,f,j),
(c,d,j),(c,f,g)\}\hbox{ and } \\
T^{\opb} & = & \{(a,d,h),(a,e,i),(a,f,g),(b,d,j),(b,e,h),(b,f,i),
(c,d,g),(c,f,j)\}.
\end{eqnarray*} 
Let $\alpha_1 = (a b)$, $\alpha_2 = (d f)$, and $\alpha_3 = (g j)(h
i)$.  Then $\alpha = (\alpha_1,\, \alpha_2,\, \alpha_3)$ is an autotopism of
both $T^{\opa}$ and $T^{\opb}$.
Further,
$\autotopismgroup{T^{\opa}}=\autotopismgroup{T^{\opb}} = \langle \alpha \rangle$. Neither autotopism group is transitive since row $c$ is fixed.
\label{egg1}
\end{example} 

In general, however, it will not be true that
$\autotopismgroup{T^{\opa}}$ and
$\autotopismgroup{T^{\opb}}$ are equal. The next example shows a bitrade where
$\autotopismgroup{T^{\opa}}$ and $\autotopismgroup{T^{\opb}}$ are of a different order.

\begin{example}
Consider the following bitrade:
\[
T^{\opa} = 
\begin{array}{c|ccccc}
\opa &  0 & 1 & 2 & 3 & 4 \\
\hline
0&  0 & 1 & 2 & 3 & - \\
1&  1 & 4 & 3 & - & 2 \\
2&  2 & 3 & 1 & - & - \\
3&  3 & - & - & 0 & - \\
4&  - & 2 & - & - & 4
\end{array}
%%%%%%%%%%%%%%%%%%%%%%%
\qquad 
T^{\opb} = 
\begin{array}{c|ccccc}
\opb &  0 & 1 & 2 & 3 & 4 \\
\hline 
0&  2 & 3 & 1 & 0 & - \\
1&  3 & 1 & 2 & - & 4 \\
2&  1 & 2 & 3 & - & - \\
3&  0 & - & - & 3 & - \\
4&  - & 4 & - & - & 2
\end{array}
\]
We will show that $\autotopismgroup{T^{\opa}}$ is a nontrivial group while
$\autotopismgroup{T^{\opb}}$ contains only the identity. 
Let 
$\alpha_1 = (0,\, 1)(3,\, 4)$,
$\alpha_2 = (0,\, 1)(3,\, 4)$,
and
$\alpha_3 = (0,\, 4)(2,\, 3)$. Then
$\alpha = (\alpha_1,\, \alpha_2,\, \alpha_3)$ is an autotopism of
$T^{\opa}$, showing that $\autotopismgroup{T^{\opa}}$ is nontrivial.

On the other hand, suppose that
$(\alpha_1,\, \alpha_2,\, \alpha_3)$ is an autotopism of $T^{\opb}$.
Row~$2$ of $T^{\opb}$ is the only row of size~$3$ so $\alpha_1$ must fix
that row. Similarly, column~$2$ is the only column of size $3$ so
$\alpha_2$ must fix that column. This implies that $\alpha_3$ fixes the
symbols $1$, $2$, and $3$.

Only rows $0$ and $1$ are of size~$4$ so $\alpha_1$ may swap $0$ and $1$.
Similarly, $\alpha_2$ may swap columns $0$ and $1$.
So if $0 \alpha_1 = 1$ and $1 \alpha_1 = 0$ then there the top-left $2 \times 2$ subsquare of
$T^{\opb}$ will be transformed to
\[
\begin{array}{c|cc}
 &  0 & 1 \\
\hline 
0&  3 & 1 \\
1&  2 & 3
\end{array}
\qquad
\text{or}
\qquad
\begin{array}{c|cc}
 &  0 & 1 \\
\hline 
0&  1 & 3 \\
1&  3 & 2
\end{array}
\]
depending on whether $\alpha_2$ fixes or swaps the first two columns.
To complete the autotopism we would need
$3 \alpha_3 = 1$ for the left subsquare 
or
$2 \alpha_3 = 1$ for the right subsquare, and both are contradictions
since $\alpha_3$ fixes these points. So
we conclude that $\alpha_1$ fixes the first two rows of $T^{\opa}$.
Finally, consider rows $3$ and $4$. They are the only rows of size $2$
so $\alpha_1$ either fixes or swaps them. If $\alpha_1$ swaps these two rows
then, to get to the same shape as $T^{\opa}$, it must be that $\alpha_2$
swaps columns $0$ and $1$. Now the top-left $2 \times 2$ subsquare will
be
\[
\begin{array}{c|cc}
 &  0 & 1 \\
\hline 
0&  3 & 2 \\
1&  1 & 3
\end{array}
\]
which implies that $3 \alpha_3 = 1$, a contradiction.
Hence $\alpha_1$ is the identity.
To find a nontrivial autotopism we are now forced to use just $\alpha_2$
and $\alpha_3$. But any nontrivial $\alpha_2$ would require a nontrivial
$\alpha_1$ to get a partial latin square of the correct shape. So
$\alpha_2$
is also the identity.
Finally, the only solution to $(1,\, 1,\, \alpha_3) T^{\opa} = T^{\opa}$ 
is $\alpha_3 = 1$. We conclude that
$\autotopismgroup{T^{\opa}}$ is the trivial group.
\end{example}

\section{Bitrades of genus $0$}

\begin{theorem}
Let $(T^{\opa},T^{\opb})$ 
and
$(T^{\opa},T^{\opbvar})$
be separated bitrades of genus~$0$.
Then 
$T^{\opb} = T^{\opbvar}$.
\end{theorem}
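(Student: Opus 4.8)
The plan is to recast the statement as a rigidity property of triangulations of the sphere. The first step is to notice that a disjoint mate is very tightly constrained by $T^{\opa}$ alone: applying (R2) and (R3) with the pairs of coordinates $(1,2)$, $(1,3)$, $(2,3)$ shows that $T^{\opb}$ and $T^{\opbvar}$ occupy exactly the same cells as $T^{\opa}$ and use, in each row and in each column, exactly the same set of symbols as $T^{\opa}$. Consequently the tripartite \emph{incidence graph} $G$, whose vertex set is the collection of rows, columns and symbols occurring in $T^{\opa}$ and whose edges are all the row--column, row--symbol and column--symbol incidences recorded by $T^{\opa}$, is simple (because $T^{\opa}$ is a partial latin square, each pair of vertices is incident at most once) and, crucially, \emph{depends only on $T^{\opa}$}: it is literally the same graph whether we read it off from $(T^{\opa},T^{\opb})$ or from $(T^{\opa},T^{\opbvar})$.

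Next I would attach to each separated genus-$0$ bitrade a triangulated closed surface whose $1$-skeleton is $G$. Each entry $(i,j,k)\in T^{\opa}$ becomes a \emph{black} triangle on the vertices $i,j,k$, each entry of $T^{\opb}$ becomes a \emph{white} triangle, and two triangles are glued along a shared edge (the cell edge $\{i,j\}$ joining the two entries in cell $(i,j)$, and similarly for the row--symbol and column--symbol edges). Every edge of $G$ lies in exactly one black and exactly one white triangle, so each edge lies in exactly two triangles and the faces are properly $2$-coloured. The \emph{separated} hypothesis is used precisely here: it says that each row, column and symbol is a single cycle of $\tau_1$, $\tau_2$, $\tau_3$ respectively, which is exactly the condition that the triangles around every vertex close up into one wheel, i.e.\ that every vertex-link is a single cycle, so the complex is an honest closed surface. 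Writing $N=\left|T^{\opa}\right|$ and $R,C,S$ for the numbers of rows, columns and symbols, the complex has $V=R+C+S$ vertices, $E=3N$ edges and $F=2N$ triangular faces; since separatedness makes $z(\tau_1),z(\tau_2),z(\tau_3)$ equal to $R,C,S$, the genus formula \eqref{eqnEuler} for genus $0$ reads $R+C+S-N=2$, whence $V-E+F=V-N=2$ and the surface is the sphere, with black faces the entries of $T^{\opa}$ and white faces the entries of $T^{\opb}$.

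The heart of the argument is then a rigidity statement for the sphere. From $E=3N=3V-6$ together with the fact that every face is a triangle, $G$ is a maximal planar graph on $V=R+C+S\ge 4$ vertices, hence $3$-connected; by Whitney's theorem a $3$-connected planar graph has an essentially unique embedding in the sphere (unique up to reflection, which preserves the set of faces), so its collection of facial triangles is an invariant of $G$ alone. I would now apply this to the two triangulations produced by $T^{\opb}$ and by $T^{\opbvar}$: they have the \emph{same} $1$-skeleton $G$ and the \emph{same} black faces (namely $T^{\opa}$), so their full sets of facial triangles coincide; deleting the common black faces leaves identical collections of white faces. Reading those white faces back as entries gives $T^{\opb}=T^{\opbvar}$, as required.

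I expect the main obstacle to be the careful verification that the glued complex is a genuine triangulated sphere rather than a pseudo-surface with pinch points: one must confirm that no triangle is repeated (immediate from $T^{\opa}\cap T^{\opb}=\emptyset$ and the distinctness of entries), that each edge meets exactly one triangle of each colour (from the partial-latin-square property applied to both $T^{\opa}$ and the mate), and above all that each vertex-link is a single cycle, which is exactly where the \emph{separated} hypothesis is indispensable; a non-separated mate would yield a vertex whose link splits into several wheels, producing a pinch point at which Whitney's theorem, and hence the whole argument, breaks down. A secondary, more routine point is connectivity: should $(T^{\opa},T^{\opb})$ decompose into several connected subbitrades, each is itself separated of genus $0$, and one simply runs the argument on each connected component.
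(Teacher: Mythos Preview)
Your proof is correct and takes a genuinely different route from the paper. The paper argues by contradiction: assuming $T^{\opb}\neq T^{\opbvar}$, it locates a row $r_0$ around which the cyclic order of faces in the two hypermap embeddings differs, deletes a far-away face to pass to a planar drawing, and then exhibits two edges of $T^{\opa}$ (of the form $(s_m,c_{m+1})$ and $(s_n,c_{n+1})$) that would be forced to cross, contradicting planarity. This is an explicit, hands-on crossing argument.

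Your approach is more structural: you observe that both mates induce the \emph{same} simple tripartite graph $G$ (an observation the paper never isolates), show that each mate yields a spherical triangulation with $1$-skeleton $G$, deduce from $E=3V-6$ that $G$ is maximal planar and hence $3$-connected, and then invoke Whitney's uniqueness theorem to conclude that the facial triangles are determined by $G$ alone. This buys you a clean, conceptual proof that makes the role of genus $0$ transparent (it is exactly what produces $E=3V-6$ and hence $3$-connectivity), and it immediately yields Corollary~\ref{thmEqualAutotopismGroups} as well, since any autotopism of $T^{\opa}$ is a graph automorphism of $G$ and therefore permutes the fixed set of facial triangles. The paper's ad hoc argument, by contrast, avoids citing Whitney but is considerably sketchier and harder to make fully rigorous. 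One small remark: your final paragraph about running the argument on connected components is unnecessary, since the edge count $E=3V-6$ together with $V_i\ge 6$ on each component already forces $G$ to be connected.
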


\begin{proof}
Let $(T^{\opa},\, T^{\opb})$ and $(T^{\opa},\, T^{\opbvar})$ be
separated bitrades of genus~$0$ where
$T^{\opb} \neq T^{\opbvar}$. Let $\mathcal{G}^{\opb}$
and $\mathcal{G}^{\opbvar}$ be the hypermap embeddings of $(T^{\opa},\,
T^{\opb})$ and $(T^{\opa},\, T^{\opbvar})$.

Pick some row $r_0$ where the order of shaded triangles in 
$\mathcal{G}^{\opbvar}$ is different to that of $\mathcal{G}^{\opb}$.
Let $r_1, r_2, \dots, r_t$ be the nearest row vertices to $r_0$. Now
choose some $(r_i, c_j, s_k) \in T^{\opa}$ such that
$r_i \notin \{ r_0, r_1, r_2, \dots, r_t \}$. (If it is not possible to
choose such an entry then apply a conjugate argument to columns symbols
or symbols. If even that is impossible then the bitrade must be the
intercalate and the theorem follows.) By removing the face
$(r_i, c_j, s_k)$ in $\mathcal{G}^{\opbvar}$ we can draw the hypermap on
the plane (refer to this embedding in the plane by
$\overline{\mathcal{G}}^{\opbvar}$).

In $\overline{\mathcal{G}}^{\opbvar}$ there will be a cycle
\begin{equation}\label{eqnBadCycle}
c_1, s_1, c_2, s_2, \dots, c_m, s_m, c_{m+x}, s_{m+x}, \dots, c_{m+1}, s_{m+1}, \dots, c_1, s_1
\end{equation}
where $x > 1$. Since this cycle is not the normal increasing sequence as
in $\overline{\mathcal{G}}^{\opb}$, there exists an integer $n$ such that
$s_n$ is between $s_{m+x}$ and $c_{m+1}$,
and
$c_{n+1}$ is between $s_{m+1}$ and $c_{1}$, moving left to right in 
\eqref{eqnBadCycle}.

Due to $T^{\opa}$ there are edges 
$(s_m, c_{m+1})$ and $(s_n, c_{n+1})$. However it is not possible to
draw both of these edges in $\overline{\mathcal{G}}^{\opbvar}$ while 
retaining planarity, a contradiction.
\end{proof}

A very similar proof gives the following result:

\begin{corollary}\label{thmEqualAutotopismGroups}
If $(T^{\opa},T^{\opb})$ is a separated bitrade of genus $0$, 
then $\autotopismgroup{T^{\opa}} = \autotopismgroup{T^{\opb}}$. 
\end{corollary}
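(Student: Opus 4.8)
The plan is to deduce the corollary directly from the uniqueness of the disjoint mate established in the preceding theorem, rather than repeating the planar hypermap argument. The key observation is that $\autotopismgroup{T^{\opa}}$ permutes the candidate disjoint mates of $T^{\opa}$: the generating set $\{g \in S_n\times S_n\times S_n \mid T^{\opa}g = T^{\opa}\}$ is already closed under products and inverses, so every $g \in \autotopismgroup{T^{\opa}}$ satisfies $T^{\opa}g = T^{\opa}$, and applying $g$ coordinatewise to the whole bitrade yields $(T^{\opa}g,\, T^{\opb}g) = (T^{\opa},\, T^{\opb}g)$, a new pair with the \emph{same} trade $T^{\opa}$.

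First I would check that $(T^{\opa},\, T^{\opb}g)$ is again a separated bitrade of genus $0$. The axioms (R1)--(R3) refer only to equalities and disjointness among triples and are therefore preserved by the diagonal action of $S_n\times S_n\times S_n$; hence $(T^{\opa}g,\, T^{\opb}g)$ is a bitrade. This pair is isotopic to $(T^{\opa},\, T^{\opb})$ via $g$, and under such an isotopism the maps $\beta_r$, and hence the permutations $\tau_i$, are replaced by their conjugates $g^{-1}\tau_i g$. Since both the genus (through the cycle counts $z(\tau_i)$) and separatedness (through the partition of each $\tau_i$ into cycles) depend only on the conjugacy class of the triple $(\tau_1,\tau_2,\tau_3)$, they are invariant under conjugation, so $(T^{\opa},\, T^{\opb}g)$ is separated of genus $0$.

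Applying the uniqueness theorem to the two separated genus-$0$ bitrades $(T^{\opa},\, T^{\opb})$ and $(T^{\opa},\, T^{\opb}g)$ then forces $T^{\opb}g = T^{\opb}$, that is, $g \in \autotopismgroup{T^{\opb}}$; this gives $\autotopismgroup{T^{\opa}} \subseteq \autotopismgroup{T^{\opb}}$. For the reverse inclusion I would invoke the symmetry of the setup: the dual pair $(T^{\opb},\, T^{\opa})$ is also a bitrade, since (R2) and (R3) are merely interchanged by swapping the two arrays, and its permutations $\tau_i'$ are conjugate to the inverses $\tau_i^{-1}$ (for instance $\tau_1' = \beta_3\,\tau_1^{-1}\,\beta_3^{-1}$, using $\beta_r^{-1}$ as the dual maps), so $(T^{\opb},\, T^{\opa})$ is again separated of genus $0$. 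Repeating the argument verbatim with the roles of $T^{\opa}$ and $T^{\opb}$ exchanged yields $\autotopismgroup{T^{\opb}} \subseteq \autotopismgroup{T^{\opa}}$, and equality follows.

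I expect the only real care to lie in the invariance claims, namely confirming that ``separated'' and ``genus $0$'' survive both the coordinatewise action of $g$ and the duality swap $T^{\opa}\leftrightarrow T^{\opb}$. Both reduce to the single fact that these properties are functions of the conjugacy class of $(\tau_1,\tau_2,\tau_3)$, so once the two conjugation formulas above are verified there is nothing further to prove. The main conceptual step---realising that an autotopism of $T^{\opa}$ manufactures a second separated genus-$0$ disjoint mate, to which the uniqueness theorem applies---is what replaces the ``very similar'' hypermap argument with an immediate corollary.
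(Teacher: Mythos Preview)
Your argument is correct and, in fact, makes the word ``corollary'' literally true: you deduce the result from the uniqueness theorem rather than, as the paper suggests, reproving it by an analogous planar-hypermap argument. The paper's route would re-run the face-removal and crossing-edges contradiction with an autotopism in place of a second disjoint mate; your route observes instead that any $g\in\autotopismgroup{T^{\opa}}$ manufactures a second separated genus-$0$ mate $T^{\opb}g$, which the theorem forces to equal $T^{\opb}$. This is cleaner and more conceptual: it isolates uniqueness of the mate as the real content, with the equality of autotopism groups as a formal consequence. The only places requiring care---invariance of ``separated'' and ``genus $0$'' under isotopy and under the swap $T^{\opa}\leftrightarrow T^{\opb}$---you have identified correctly, and your conjugation formula $\tau_1'=\beta_3\,\tau_1^{-1}\,\beta_3^{-1}$ (and its analogues) does the job, since cycle counts are preserved by both inversion and conjugation.
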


\section{Automorphisms and autotopisms}

\begin{lemma}\label{lemmaAutotopismInAutomorphism}
Let 
$(T^{\opa},T^{\opb})$ be a separated bitrade with representation 
$[\tau_1,\, \tau_2,\, \tau_3]$.
Then $\automorphismgroup{\tau_i}$ is isomorphic to a subgroup of
$\autotopismgroup{T^{\opa},T^{\opb}}$.
\end{lemma}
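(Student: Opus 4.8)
The plan is to build an explicit injective homomorphism from $\automorphismgroup{[\tau_1,\tau_2,\tau_3]} = C_{S_m}(G)$ into $\autotopismgroup{T^{\opa},T^{\opb}}$, where $G = \langle\tau_1,\tau_2,\tau_3\rangle$ acts on the $m = \left|T^{\opa}\right|$ cells of $T^{\opa}$. First I would produce, from each $g \in C_{S_m}(G)$, a candidate autotopism $(\alpha_1,\alpha_2,\alpha_3)$. Since $\tau_1 = \beta_2^{-1}\beta_3$ preserves the row of every cell (and likewise $\tau_2$ preserves columns and $\tau_3$ preserves symbols), each $\tau_1$-cycle lies inside a single row; separatedness upgrades this to a bijection between the cycles of $\tau_1$ and the rows, and similarly for $\tau_2$ with columns and $\tau_3$ with symbols. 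Because $g$ commutes with each $\tau_i$, it permutes their cycles, and hence induces permutations $\alpha_1$ of the rows, $\alpha_2$ of the columns and $\alpha_3$ of the symbols. Well-definedness (independence of the chosen cell within a row/column/symbol) follows since $g$ carries whole cycles to whole cycles, and $g \mapsto (\alpha_1,\alpha_2,\alpha_3)$ is visibly multiplicative.

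Next I would check $(\alpha_1,\alpha_2,\alpha_3) \in \autotopismgroup{T^{\opa}}$. For a cell $(i,j,k) \in T^{\opa}$, the image $(i,j,k)g$ lies in the $\tau_1$-cycle labelled $i\alpha_1$, the $\tau_2$-cycle labelled $j\alpha_2$, and the $\tau_3$-cycle labelled $k\alpha_3$; as a cell of a partial latin square is determined by its row and column, this forces $(i,j,k)g = (i\alpha_1, j\alpha_2, k\alpha_3)$. Since $g$ permutes $T^{\opa}$, the isotopism $(\alpha_1,\alpha_2,\alpha_3)$ fixes $T^{\opa}$ setwise.

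The main step, and the one I expect to be the real obstacle, is showing that this same isotopism preserves $T^{\opb}$, since $g$ is a priori only a symmetry of $T^{\opa}$. The key observation is that although $\beta_1,\beta_2,\beta_3 \colon T^{\opb} \to T^{\opa}$ do not act on $T^{\opa}$, they differ only by the $\tau_i$: from $\tau_3 = \beta_1^{-1}\beta_2$ and $\tau_1 = \beta_2^{-1}\beta_3$ one gets $\beta_2 = \beta_1\tau_3$ and $\beta_3 = \beta_1\tau_3\tau_1$. Hence the conjugate $g' := \beta_r g \beta_r^{-1}$, a permutation of $T^{\opb}$, is \emph{independent of} $r \in \{1,2,3\}$, because $g$ commutes with every $\tau_i$ and so the $\tau_i$ factors cancel. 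I would then evaluate $g'$ on a cell $(b_1,b_2,b_3) \in T^{\opb}$ using two different representatives: computing via $\beta_1$ (which alters only the row) pins the column and symbol coordinates of $(b_1,b_2,b_3)g'$ to $b_2\alpha_2$ and $b_3\alpha_3$, while computing via $\beta_2$ (which alters only the column) pins the row coordinate to $b_1\alpha_1$. Matching the two expressions yields $(b_1,b_2,b_3)g' = (b_1\alpha_1, b_2\alpha_2, b_3\alpha_3) \in T^{\opb}$, so $(\alpha_1,\alpha_2,\alpha_3)$ fixes $T^{\opb}$ as well.

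Combining the two containments gives $(\alpha_1,\alpha_2,\alpha_3) \in \autotopismgroup{T^{\opa}} \cap \autotopismgroup{T^{\opb}} = \autotopismgroup{T^{\opa},T^{\opb}}$. Injectivity is then immediate: if $(\alpha_1,\alpha_2,\alpha_3)$ is the identity, then $(i,j,k)g = (i,j,k)$ for every cell, whence $g = 1$. Thus $g \mapsto (\alpha_1,\alpha_2,\alpha_3)$ embeds $\automorphismgroup{[\tau_1,\tau_2,\tau_3]}$ as a subgroup of $\autotopismgroup{T^{\opa},T^{\opb}}$, which is the claim. I expect the independence-of-$r$ cancellation for $g'$ to be the crux; once it is in place, the remainder is coordinate bookkeeping.
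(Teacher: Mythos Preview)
Your approach is essentially the same as the paper's: both construct the homomorphism $\theta \mapsto \overline{\theta}$ by observing that a permutation commuting with each $\tau_i$ permutes the $\tau_i$-cycles and hence, by separatedness, the rows, columns, and symbols. You are in fact more careful than the paper, which simply asserts that the induced triple is an autotopism of $(T^{\opa},T^{\opb})$ and never verifies injectivity; your $\beta_r$-conjugation argument for the $T^{\opb}$ side and your injectivity check fill in exactly these gaps.
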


\begin{proof}
Let $n = \left| T^{\opa} \right|$.
A necessary and sufficient condition for 
$\theta \in S_n$ be in $\automorphismgroup{\tau_i}$ is that
$\theta$ commutes with $\tau_i$, for $i = 1$, $2$, $3$. 
Consider a cycle $\rho=(x,y,\ldots)$ in $\tau_1$. Then
the condition
$x \theta \tau_1 = x \tau_1 \theta$ implies that
$(x \theta) \tau_1 = (y \theta)$
so $\theta$ sends cycles of $\tau_1$ to cycles of $\tau_1$.
Thus $\theta$ acts as a permutation on the set ${\mathcal A}_i$ of
cycles of $\tau_1$. In this way, $\theta$ is an autotopism (say,
$\overline{\theta}$) of
$(T^{\opa},\, T^{\opb})$. We may think of
${\mathcal A}_1 \cup {\mathcal A}_2 \cup {\mathcal A}_3$
as a system of blocks for $\automorphismgroup{\tau_i}$ and
the mapping $\theta \mapsto \overline{\theta}$ is a homomorphism into
$\autotopismgroup{T^{\opa},\, T^{\opb}}$. 
So $\automorphismgroup{\tau_i}$ is isomorphic to a subgroup of
$\autotopismgroup{T^{\opa},T^{\opb}}$.
\end{proof}

Question: when is 
$\automorphismgroup{\tau_i}$ isomorphic to $\autotopismgroup{T^{\opa},T^{\opb}}$?

\begin{remark} 
In general $\automorphismgroup{\tau_i}$ may be isomorphic to a proper subgroup of
$\autotopismgroup{T^{\opa}}$:
%%%%%%%%%%%%%%%%%%%%%%%%%%%%%%%%%%%%%%%%%%
\begin{align*}
\begin{aligned}
    T^{\opa} &= 
    \begin{array}{c||ccccc}
    \opa & 0 & 1 & 2 & 3 & 4 \\
    \hline \hline
    0 & 0 & 1 & 2 & 3 & 4 \\
    1 & 1 & 2 & 3 & 4 & 0 \\
    2 & 2 & 3 & 4 & 0 & 1 \\
    3 & 3 & 4 & 0 & 1 & 2 \\
    4 & 4 & 0 & 1 & 2 & 3
    \end{array}
\end{aligned}
\begin{aligned}
    T^{\opb} &=
    \begin{array}{c||ccccc}
    \opb & 0 & 1 & 2 & 3 & 4 \\
    \hline \hline
    0 & 4 & 0 & 1 & 2 & 3  \\
    1 & 0 & 3 & 2 & 1 & 4 \\
    2 & 1 & 4 & 0 & 3 & 2 \\
    3 & 2 & 1 & 3 & 4 & 0 \\
    4 & 3 & 2 & 4 & 0 & 1
    \end{array}
\end{aligned}
\end{align*}
%%%%%%%%%%%%%%%%%%%%%%%%%%%%%%%%%%%%%%%%%%
Then $T^{\opa}$ has an autotopism $\theta$ defined by
$(x,y,z) \theta = (x+1, y, z+1)$ where addition is modulo $5$.
By labelling the elements in the top two rows of $T^{\opa}$ with the letters $a$, $b$,
\ldots, $j$, we see that $\tau_1$ consists of a single $5$-cycle for the
first row, and two disjoint cycles for the second row:
\begin{align*}
\begin{aligned}
\begin{array}{ccccc}
a & b & c & d & e \\
f & g & h & i & j
\end{array}
\end{aligned}
\qquad \tau_1 = (a, b, c, d, e) (f, i, j)(g, h) \cdots
\end{align*}
Now $\theta$ can be interpreted as a map on the letters, for example
\[
(1,1,1) \theta = (2,1,2) \quad \Rightarrow \quad a \theta = f.
\]
If $\theta$ is an element of $\automorphismgroup{\tau_i}$ then it must (at least) commute with
$\tau_1$:
\begin{align*}
a \tau_1 \theta &= a \theta \tau_1 
\Rightarrow b \theta = f \tau_1 
\Rightarrow g = i, 
\end{align*}
which is a contradiction. So $\theta$ is not an automorphism.  
\end{remark}

\begin{lemma}[Lemma 1.7.14, \cite{LZ}]
\label{lemmaLeftRightAction}
Let a group $G$ of order $n$ act on itself by multiplication on the
right: an element $x \in G$ acts by sending $a \in G$ to $ax$. Then the
centraliser of $G$ in $S_n$ is obtained by the action of $G$ on itself
by multiplication on the left: an element $y \in G$ acts by sending $a
\in G$ to $y^{-1}a$.
\end{lemma}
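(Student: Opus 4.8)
The plan is to exhibit two explicit families of permutations and show they cut out the same subgroup. Write $R_x \colon a \mapsto ax$ for the right-regular action, and for each $y \in G$ define the candidate centralising map $L_y \colon a \mapsto y^{-1}a$. First I would dispose of the elementary bookkeeping: each $R_x$ and each $L_y$ is a bijection of $G$ (with inverses $R_{x^{-1}}$ and $L_{y^{-1}}$), and, under the paper's left-to-right composition convention, $L_y L_{y'} = L_{yy'}$ since $(a)(L_y L_{y'}) = (yy')^{-1}a$. Thus $y \mapsto L_y$ is a genuine homomorphism $G \to S_n$; because $G$ acts regularly on itself it is injective, so its image $L(G)$ is a subgroup of order $n$, and likewise for $R(G)$.

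The forward inclusion $L(G) \subseteq C_{S_n}(R(G))$ is a one-line associativity check. For any $a \in G$,
\[
(a)(R_x L_y) = y^{-1}(ax), \qquad (a)(L_y R_x) = (y^{-1}a)x,
\]
and these agree by associativity, so $R_x L_y = L_y R_x$ for all $x, y \in G$. Hence every $L_y$ centralises the whole of $R(G)$.

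The substance is the reverse inclusion $C_{S_n}(R(G)) \subseteq L(G)$. Let $\sigma \in S_n$ commute with every $R_x$, let $e$ be the identity of $G$, and set $y^{-1} := e\sigma$. Evaluating the commuting relation $\sigma R_a = R_a \sigma$ at the point $e$ gives
\[
(e)(\sigma R_a) = (e\sigma)a = y^{-1}a, \qquad (e)(R_a \sigma) = (ea)\sigma = a\sigma,
\]
so $a\sigma = y^{-1}a$ for every $a \in G$; that is, $\sigma = L_y \in L(G)$. Combining the two inclusions yields $C_{S_n}(R(G)) = L(G)$, which is exactly the assertion.

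I expect the only real pitfall to be notational discipline rather than mathematical depth. The inverse in the definition $L_y \colon a \mapsto y^{-1}a$ is precisely what makes $y \mapsto L_y$ a homomorphism instead of an anti-homomorphism under left-to-right composition, so dropping it would break the argument; and one must evaluate the centralising identity \emph{at the identity element} to pin down $\sigma$ cleanly, rather than at a generic point. Beyond that, regularity of the action supplies the injectivity needed to match orders, and associativity supplies the commutativity, so no further structural input is required.
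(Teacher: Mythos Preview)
Your argument is correct and is the standard proof of this classical fact: the associativity check gives $L(G)\subseteq C_{S_n}(R(G))$, and evaluating an arbitrary centralising permutation at the identity forces it to equal some $L_y$, giving the reverse inclusion. Note, however, that the paper does not supply its own proof of this lemma at all --- it is quoted verbatim as Lemma~1.7.14 of \cite{LZ} and used as a black box --- so there is nothing in the paper to compare your route against; your write-up simply fills in what the paper outsources to the reference.
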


Let $g_i$ be permutations acting on a set $\darts$ of size $n$.
A {\em $k$-constellation} is a sequence $C = [g_1, g_2, \ldots, g_k]$ where
$g_i \in S_n$ which satisfies two properties:
\begin{itemize}
\item the cartographic group $G = \langle g_1, g_2, \ldots, g_k \rangle$ acts
transitively on the underlying set of $n$ points;
\item $g_1 g_2 \cdots g_k = \text{id}$.
\end{itemize}
We say that $[g_1, g_2, \ldots, g_k]$ has degree $n$ and length $k$.
The {\em automorphism group} of a constellation $C = [g_1, g_2, \ldots,
g_k]$ is the group
\[
\text{Aut}(C) = \{ h \in S_n \mid h^{-1} g_i h = g_i \text{, for all $i = 1, \ldots, k$}\}
\]
which is just $C_{S_n}(G)$, the centraliser of $G$ in $S_n$.
In this terminology, a latin bitrade is a $3$-constellation $[\tau_1, \tau_2, \tau_3]$
that also satisfies \textnormal{(Q1)} and \textnormal{(Q2)}.

\begin{proposition}[Proposition 1.7.15, \cite{LZ}]
\label{propTransitiveIsomorphic}
The following two properties of a constellation are equivalent:
\begin{enumerate}
\item The automorphism group acts transitively on the underlying set.
\item The automorphism group is isomorphic to the cartographic group.
\end{enumerate}
If a constellation possesses either (and then both) of these properties,
then the elements of the underlying set are in bijection with the
elements of the cartographic group, and the group acts on itself by
multiplication on the right, while the automorphism group acts by
multiplication on the left, as in Lemma~\ref{lemmaLeftRightAction}.
\end{proposition}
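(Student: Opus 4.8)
The plan is to deduce everything from a single classical fact: the centralizer of a transitive permutation group is \emph{semiregular}. Write $H = \mathrm{Aut}(C) = C_{S_n}(G)$ for the automorphism group, where $G = \langle g_1, g_2, \ldots, g_k \rangle$ is the cartographic group acting transitively on the underlying set $\Omega$ of size $n$. First I would prove the semiregularity lemma: if $h \in H$ fixes some point $\omega \in \Omega$, then for any $\omega' \in \Omega$ transitivity of $G$ gives $g \in G$ with $\omega g = \omega'$, and since $h$ commutes with $g$ we obtain $\omega' h = \omega g h = \omega h g = \omega g = \omega'$. Thus $h$ fixes every point, so $h = 1$, every point-stabiliser in $H$ is trivial, and $H$ acts semiregularly. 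Because all orbits of a semiregular group have size $|H|$ and partition $\Omega$, this yields $|H| \mid n$, and in particular $|H| \le n$.

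Next I would record the two cardinality bounds. Since $G$ is transitive, the orbit-stabiliser theorem gives $|G| = n\,|G_\omega| \ge n$, and combined with $|H| \le n$ this produces the squeeze $|H| \le n \le |G|$. This immediately settles the implication (2)\,$\Rightarrow$\,(1): if $H \cong G$ as abstract groups, then $|H| = |G|$, forcing equality throughout, so $|H| = n = |G|$. Then $H$ is semiregular of order $n$ acting on $n$ points, hence regular, hence transitive, which is exactly (1).

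For (1)\,$\Rightarrow$\,(2), together with the final structural claim, I would show that transitivity of $H$ forces $G$ to be regular. If $H$ is transitive then, being also semiregular, it is regular, so $|H| = n$. Now the same semiregularity lemma is applied with the roles reversed: since $H$ is transitive, $C_{S_n}(H)$ is semiregular, and because $G$ and $H$ centralise each other by definition we have $G \le C_{S_n}(H)$, so $G$ is semiregular; being transitive as well, $G$ is regular. Having established that $G$ is regular, I would fix a base point $\omega_0$ and identify $\Omega$ with $G$ via $\omega_0 g \mapsto g$, so that $G$ acts on $\Omega \cong G$ by right multiplication. By Lemma~\ref{lemmaLeftRightAction}, $H = C_{S_n}(G)$ is precisely the group of left multiplications, which is isomorphic to $G$ and acts on $\Omega \cong G$ by left multiplication. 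This delivers (2), namely $\mathrm{Aut}(C) \cong G$, along with the concluding bijection and the left/right multiplication description.

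The two implications are routine once the cardinalities are in hand; the step carrying all the weight is the semiregularity lemma, and the small point worth flagging is that it must be invoked \emph{twice} --- once on $H = C_{S_n}(G)$ to obtain $|H| \le n$, and once on $C_{S_n}(H) \ni G$ for the direction (1)\,$\Rightarrow$\,(2). I do not expect a serious obstacle beyond being careful that ``semiregular plus transitive equals regular'' and that a regular permutation group is abstractly isomorphic to the group whose regular representation it realises, so that Lemma~\ref{lemmaLeftRightAction} yields a genuine isomorphism rather than merely an equality of orders.
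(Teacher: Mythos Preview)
The paper does not supply its own proof of this proposition; it is quoted verbatim as Proposition~1.7.15 from Lando--Zvonkin~\cite{LZ} and used as a black box in the proof of Lemma~\ref{lemmaSharplyIsomorphic}. Your argument is correct and is essentially the standard one: the key ingredient is precisely that the centraliser of a transitive group is semiregular, applied once to $H=C_{S_n}(G)$ and once to $G\le C_{S_n}(H)$, after which Lemma~\ref{lemmaLeftRightAction} identifies the two regular actions with right and left multiplication on $G$.
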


\begin{lemma}\label{lemmaSharplyIsomorphic}
If $\autotopismgroup{T^{\opa}}$ is regular and
$\automorphismgroup{\tau_i}$ is transitive then 
$\autotopismgroup{T^{\opa}}$ is isomorphic to
$\automorphismgroup{\tau_i}$.
\end{lemma}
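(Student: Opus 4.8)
The plan is to determine the orders of the two groups, exhibit an embedding of one into the other, and then close the gap by an order count. Throughout write $m = \left| T^{\opa} \right|$ and $G = \langle \tau_1, \tau_2, \tau_3 \rangle$, so that $\automorphismgroup{\tau_i} = C_{S_m}(G)$ and $[\tau_1, \tau_2, \tau_3]$ is a $3$-constellation with cartographic group $G$.

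First I would feed the hypothesis that $\automorphismgroup{\tau_i}$ is transitive into Proposition~\ref{propTransitiveIsomorphic}: this is exactly property~(1) of that proposition for the constellation $[\tau_1, \tau_2, \tau_3]$, so property~(2) follows, giving $\automorphismgroup{\tau_i} \cong G$. The proposition further places the $m$ darts in bijection with $G$, with $\automorphismgroup{\tau_i}$ acting by left multiplication; this action is regular, so $\left| \automorphismgroup{\tau_i} \right| = \left| G \right| = m$.

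Next, the hypothesis that $\autotopismgroup{T^{\opa}}$ is regular on the $m$ triples of $T^{\opa}$ gives $\left| \autotopismgroup{T^{\opa}} \right| = m$ at once. I would then invoke Lemma~\ref{lemmaAutotopismInAutomorphism} to obtain an injective homomorphism $\automorphismgroup{\tau_i} \hookrightarrow \autotopismgroup{T^{\opa}, T^{\opb}}$, and recall that $\autotopismgroup{T^{\opa}, T^{\opb}} = \autotopismgroup{T^{\opa}} \cap \autotopismgroup{T^{\opb}} \leq \autotopismgroup{T^{\opa}}$. Stringing the two order computations together yields
\[
m = \left| \automorphismgroup{\tau_i} \right| \leq \left| \autotopismgroup{T^{\opa}, T^{\opb}} \right| \leq \left| \autotopismgroup{T^{\opa}} \right| = m,
\]
so every inequality is forced to be an equality. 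Hence $\autotopismgroup{T^{\opa}, T^{\opb}} = \autotopismgroup{T^{\opa}}$ and the embedding is surjective, giving the desired isomorphism $\automorphismgroup{\tau_i} \cong \autotopismgroup{T^{\opa}}$.

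I expect the main obstacle to be conceptual rather than computational: $\autotopismgroup{T^{\opa}}$ acts on the triples of $T^{\opa}$ whereas $\automorphismgroup{\tau_i}$ acts on the $m$ darts, so the two groups are not the same permutation group and can only be compared as abstract groups. The bridge is Lemma~\ref{lemmaAutotopismInAutomorphism} (whose hypothesis that the bitrade be separated is in force here), and the delicate point is to verify that the two independent order counts each land on exactly $m$, so that the a priori one-sided embedding is forced onto all of $\autotopismgroup{T^{\opa}}$.
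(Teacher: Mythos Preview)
Your argument is correct and follows essentially the same route as the paper: regularity gives $\left|\autotopismgroup{T^{\opa}}\right| = m$, Lemma~\ref{lemmaAutotopismInAutomorphism} embeds $\automorphismgroup{\tau_i}$ into $\autotopismgroup{T^{\opa},T^{\opb}} \leq \autotopismgroup{T^{\opa}}$, and Proposition~\ref{propTransitiveIsomorphic} applied to the transitivity of $\automorphismgroup{\tau_i}$ forces $\left|\automorphismgroup{\tau_i}\right| = m$, so the embedding is onto. Your write-up is in fact a bit more careful than the paper's: you make the intermediate passage through $\autotopismgroup{T^{\opa},T^{\opb}}$ explicit, and you correctly invoke the transitivity of $\automorphismgroup{\tau_i}$ when applying Proposition~\ref{propTransitiveIsomorphic} (the paper's text says ``$\autotopismgroup{T^{\opa}}$ is transitive'' at that step, which is evidently a slip).
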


\begin{proof}
The autotopism group $\autotopismgroup{T^{\opa}}$ is regular so
$\left| \autotopismgroup{T^{\opa}} \right| = \left| T^{\opa} \right|$.
By Lemma~\ref{lemmaAutotopismInAutomorphism},
$\automorphismgroup{\tau_i}$ is isomorphic to a subgroup of $\autotopismgroup{T^{\opa}}$ so
$\left| \automorphismgroup{\tau_i}\right| \leq \left| \autotopismgroup{T^{\opa}} \right| = \left|
T^{\opa} \right|$. Since
$\autotopismgroup{T^{\opa}}$ is transitive, Proposition~\ref{propTransitiveIsomorphic}
implies that 
$\left| \automorphismgroup{\tau_i} \right| = \left| T^{\opa} \right|$. Hence 
$\autotopismgroup{T^{\opa}} \cong \automorphismgroup{\tau_i}$.  
\end{proof}

%\bibliographystyle{plain}
%\bibliography{transitivebitrades}

\end{document}